\numberwithin{equation}{section}
\newtheorem{theorem}{Theorem}[section]
\newtheorem{pro}[theorem]{Proposition}
\newtheorem{cor}[theorem]{Corollary}
\theoremstyle{remark}
\begin{document}
\title[]{Remark on a determinant involving prime numbers}
\author[]{Huan Xiao}
\address{School of Artificial Intelligence, Zhuhai City Polytechnic, Zhuhai, China}
\email{xiaogo66@outlook.com}
\curraddr{}
\thanks{}
\keywords{determinant; prime numbers}

\begin{abstract}
Pausinger recently investigated a special determinant involving prime numbers. In this short note we point out that this type of determinants was already known in linear algebra and its computation is unrelated to prime numbers.
\end{abstract}

\maketitle

Let $ p_{n} $ be the $ n $th prime number. Recently Pausinger \cite{p} investigated a special determinant involving prime numbers. It is the  determinant of the $ n\times n $ matrix whose diagonal contains the first $  n$ prime numbers and all other elements are ones and thus
\begin{equation*}
D_{n}=\begin{vmatrix}
2 & 1& \cdots &1\\ 
1 & 3& \cdots &1\\ 
\vdots &\vdots &\ddots & \vdots\\
1 & 1& \cdots &p_{n}\\ 
\end{vmatrix}.
\end{equation*}
It is the sequence A067549 of The On-Line Encyclopedia of Integer Sequences \cite{e} and the first few values of $ D_{n} $ are $ 2, 5, 22, 140, 1448, 17856 $. Pausinger \cite{p} relates this sequence to a 
counting problem and obtains a recurrence relation on $ D_{n} $, see \cite[Lemma 2]{p}.\par
Here in this short note we point out that this type of determinants like $ D_{n} $ was already known in linear algebra. Although at this moment I am unable to give a specific reference, the computation of $ D_{n} $ is standard and the value of it is well known. Indeed we have
\begin{pro} 
Let $ a_{1}a_{2}\cdots a_{n}\neq 0 $, then
\begin{equation*}
d_{n}=\begin{vmatrix}
1+a_{1} & 1& \cdots &1\\ 
1 & 1+a_{2}& \cdots &1\\ 
\vdots &\vdots &\ddots & \vdots\\
1 & 1& \cdots &1+a_{n}\\ 
\end{vmatrix}=\left(1+\sum_{k=1}^{n}\dfrac{1}{a_{k}}\right)a_{1}a_{2}\cdots a_{n}.
\end{equation*}
\end{pro}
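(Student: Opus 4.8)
The plan is to recognise that the matrix defining $d_n$ is a rank-one perturbation of a diagonal matrix and then to apply the matrix determinant lemma. Writing $D=\operatorname{diag}(a_1,\dots,a_n)$ and letting $\mathbf{e}=(1,\dots,1)^{\mathsf{T}}$ be the all-ones column vector, the matrix in the statement is exactly $D+\mathbf{e}\mathbf{e}^{\mathsf{T}}$, since its $(i,i)$ entry is $a_i+1$ and every off-diagonal entry is $0+1=1$. The hypothesis $a_1a_2\cdots a_n\neq 0$ is precisely what guarantees that $D$ is invertible, which is the only assumption the lemma needs.

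First I would invoke the identity $\det(D+\mathbf{e}\mathbf{e}^{\mathsf{T}})=\det(D)\,(1+\mathbf{e}^{\mathsf{T}}D^{-1}\mathbf{e})$. Here $\det(D)=a_1a_2\cdots a_n$, while $D^{-1}=\operatorname{diag}(1/a_1,\dots,1/a_n)$, so that $\mathbf{e}^{\mathsf{T}}D^{-1}\mathbf{e}=\sum_{k=1}^{n}1/a_k$. Substituting these two evaluations yields $d_n=\bigl(1+\sum_{k=1}^{n}1/a_k\bigr)a_1a_2\cdots a_n$, which is the claimed formula. (If one wishes, the lemma itself follows from equating the two Schur-complement expansions of the bordered determinant $\det\left(\begin{smallmatrix}1&\mathbf{e}^{\mathsf{T}}\\-\mathbf{e}&D\end{smallmatrix}\right)$.)

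If one prefers a fully self-contained argument that quotes nothing, I would instead prove the formula by elementary operations. Subtracting the first row from each of the remaining rows turns the matrix into arrowhead form, in which row $i\geq 2$ has only the entries $-a_1$ in the first column and $a_i$ in the $i$th column, with zeros elsewhere; this does not change the determinant. One then clears the subdiagonal entries of the first column by adding $(a_1/a_j)$ times column $j$ to column $1$ for $j=2,\dots,n$, using $-a_1+(a_1/a_i)a_i=0$. The result is an upper-triangular matrix whose $(1,1)$ entry is $1+a_1+a_1\sum_{j=2}^{n}1/a_j$ and whose remaining diagonal entries are $a_2,\dots,a_n$, so its determinant is the product of these, which regroups to the same expression.

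Since the computation is entirely standard, I do not expect a genuine obstacle; the only point demanding care is the bookkeeping, namely checking that the scalar emerging from the rank-one update (equivalently, from the cleared first column) really does collapse to $\bigl(1+\sum_{k}1/a_k\bigr)\prod_{k}a_k$ once the common factors are redistributed. It is also worth noting that, after clearing denominators, the right-hand side equals $\prod_{k=1}^{n}a_k+\sum_{i=1}^{n}\prod_{k\neq i}a_k$, a polynomial identity that persists even when some $a_k$ vanishes; the restriction $a_1\cdots a_n\neq 0$ is imposed only so that the stated form with reciprocals is meaningful.
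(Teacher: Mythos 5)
Your proposal is correct, and it in fact contains two complete proofs. The ``fully self-contained'' argument you offer as a fallback---subtracting the first row from rows $2,\dots,n$ to reach arrowhead form, then adding $(a_1/a_j)$ times column $j$ to column $1$ for $j=2,\dots,n$, and reading off the product of the diagonal entries of the resulting triangular matrix---is exactly the paper's proof, down to the same two elementary transformations and the same regrouping of the $(1,1)$ entry as $1+a_1+a_1\sum_{j=2}^{n}1/a_j=a_1\bigl(1+\sum_{k=1}^{n}1/a_k\bigr)$. Your primary argument, however, is genuinely different: you identify the matrix as the rank-one update $D+\mathbf{e}\mathbf{e}^{\mathsf{T}}$ of $D=\operatorname{diag}(a_1,\dots,a_n)$ and invoke the matrix determinant lemma $\det(D+\mathbf{e}\mathbf{e}^{\mathsf{T}})=\det(D)\bigl(1+\mathbf{e}^{\mathsf{T}}D^{-1}\mathbf{e}\bigr)$, which yields the formula in two lines and makes structurally transparent why the answer has the shape $\bigl(1+\sum_{k}1/a_k\bigr)\prod_{k}a_k$. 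The cost is reliance on a quoted lemma, though your remark about equating the two Schur-complement expansions of $\det\bigl(\begin{smallmatrix}1&\mathbf{e}^{\mathsf{T}}\\-\mathbf{e}&D\end{smallmatrix}\bigr)$ closes that gap; what the paper's route (and your fallback) buys instead is complete self-containedness at the level of row and column operations. Your closing observation---that after clearing denominators the identity reads $d_n=\prod_{k=1}^{n}a_k+\sum_{i=1}^{n}\prod_{k\neq i}a_k$ and persists even when some $a_k$ vanish, since both sides are polynomials in the $a_k$---is also correct and is a point the paper does not make.
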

\begin{proof}
This result is already known in linear algebra and for the completeness we give a proof here. Let $ r_{i} $ and $ c_{i} $ denote the $ i $-th row and $ i $-th column of the matrix in question. The first step is making the elementary transformations $ r_{i}-r_{1} $ for $ i=2,\cdots ,n $, then we have
\begin{equation}
d_{n}= \begin{vmatrix}
1+a_{1} & 1& \cdots &1\\ 
-a_{1} & a_{2}&  \\ 
\vdots & &\ddots & \\
-a_{1} & &  &a_{n}\\ 
\end{vmatrix}.
\end{equation}
The second step is making the following elementary transformations for $ i=2,\cdots ,n $
\begin{equation*}
c_{1}+\dfrac{a_{1}}{a_{i}}c_{i}.
\end{equation*}
The resulting determinant is
\begin{equation}\label{0.2} 
d_{n}=\begin{vmatrix}
b & 1& \cdots &1\\ 
0 & a_{2}&  \\ 
\vdots & &\ddots & \\
0 & &  &a_{n}\\ 
\end{vmatrix}
\end{equation}
where 
\begin{equation}
b=1+a_{1}+a_{1}\sum_{k=2}^{n}\dfrac{1}{a_{k}}=a_{1}\left(1+\sum_{k=1}^{n}\dfrac{1}{a_{k}}\right).
\end{equation}
By \eqref{0.2} we have
\begin{equation}
d_{n}=\left(1+\sum_{k=1}^{n}\dfrac{1}{a_{k}}\right)a_{1}a_{2}\cdots a_{n}.
\end{equation}
\end{proof}

Now taking $ a_{k}=p_{k}-1 $ we have
\begin{cor}
\begin{equation}
D_{n}=\begin{vmatrix}
2 & 1& \cdots &1\\ 
1 & 3& \cdots &1\\ 
\vdots &\vdots &\ddots & \vdots\\
1 & 1& \cdots &p_{n}\\ 
\end{vmatrix}=\left(1+\sum_{k=1}^{n}\dfrac{1}{p_{k}-1}\right)\prod_{k=1}^{n}\left(p_{k}-1\right).
\end{equation}
\end{cor}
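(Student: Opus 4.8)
The plan is to obtain the Corollary as an immediate specialization of the Proposition, with no fresh computation required. The matrix defining $D_{n}$ has diagonal entries equal to the first $n$ primes $p_{1},p_{2},\ldots,p_{n}$ and all off-diagonal entries equal to $1$, which is exactly the shape of the matrix in the Proposition once I identify each diagonal entry $1+a_{k}$ with $p_{k}$. I would therefore set
\[
a_{k}=p_{k}-1 \qquad (k=1,2,\ldots,n),
\]
so that $1+a_{k}=p_{k}$ reproduces the $k$-th diagonal entry of $D_{n}$ precisely.

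Before invoking the Proposition I must check that its hypothesis $a_{1}a_{2}\cdots a_{n}\neq 0$ is met. Since every prime satisfies $p_{k}\geq 2$, I have $a_{k}=p_{k}-1\geq 1>0$ for each $k$, whence the product $a_{1}a_{2}\cdots a_{n}$ is strictly positive and in particular nonzero. Thus the Proposition applies verbatim to this choice of the $a_{k}$.

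It then remains only to substitute $a_{k}=p_{k}-1$ into the formula of the Proposition, which reads $d_{n}=\bigl(1+\sum_{k=1}^{n}\tfrac{1}{a_{k}}\bigr)a_{1}a_{2}\cdots a_{n}$. Carrying out the substitution gives
\[
D_{n}=\left(1+\sum_{k=1}^{n}\frac{1}{p_{k}-1}\right)\prod_{k=1}^{n}\left(p_{k}-1\right),
\]
which is exactly the claimed expression. I do not anticipate any genuine obstacle here: the only point needing verification is the nonvanishing of the $a_{k}$, and this follows trivially from the elementary fact that all primes are at least $2$. Everything else is a direct rewriting of the sum and product in terms of $p_{k}$.
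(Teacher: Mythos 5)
Your proposal is correct and matches the paper exactly: the paper also obtains the Corollary by taking $a_{k}=p_{k}-1$ in the Proposition. Your explicit check that $a_{k}=p_{k}-1\geq 1$, so the hypothesis $a_{1}a_{2}\cdots a_{n}\neq 0$ holds, is a small point the paper leaves implicit, but it is the same argument.
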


\end{document}